\documentclass[12pt]{article}

\usepackage{multicol}
\usepackage{amssymb,amsthm,amsmath}
\usepackage[usenames,dvipsnames]{xcolor}
\usepackage{tikz}
\usepackage{mathabx}
\allowdisplaybreaks
\usepackage{color,graphicx,epsfig}
\usepackage{mathrsfs}
\usepackage{enumerate} 
\usepackage{enumitem} 
\usepackage{esvect}

\usepackage{tikz}

\newcommand{\NASM}{N\hspace{-.5mm}ASM}

\newcommand{\comment}[1]{}

\definecolor{teal}{RGB}{0,128,128}
\definecolor{darkpurple}{RGB}{128,0,128}

\usepackage[titletoc,toc,title]{appendix}

\newtheorem{theorem}{Theorem}[section]

\newtheorem{lemma}[theorem]{Lemma}
\newtheorem{cor}[theorem]{Corollary}
\newtheorem{guess}[theorem]{Conjecture}
\theoremstyle{definition}

\theoremstyle{definition}

\newtheorem{ex}[theorem]{Example}

\def \Z {\mathbb Z}

\def \h {\mathbf h}
\def \k {\mathbf k}

\def \hh {\mathbf h}

\def \kk {\mathbf k}

\def \uu {\mathbf u}
\def \vv {\mathbf v}

\title{A note on near alternating sign matrices with prescribed row and column weights}

\author{
T.\ Traetta\footnotemark[1]
}
\date{\vspace{-5ex}}

\begin{document}
\maketitle

\footnotetext[2]{DICATAM, Universit\`{a} degli Studi di Brescia, Via Branze 43, 25123 Brescia, Italy. E-mail: tommaso.traetta@unibs.it}

\begin{abstract} 
We study near alternating sign matrices with prescribed row and column
weights. After recalling the basic definitions and the necessary conditions
coming from the Gale--Ryser theorem, we give a graph-theoretic
characterization of the existence problem. More precisely, we show that a
near alternating sign matrix with prescribed row and column weight sequences exists
if and only if there exists a binary array with the same weights whose
associated adjacency graph is bipartite. This reformulation shows that the
Gale--Ryser conditions are not sufficient in general. We then formulate a
natural conjecture up to permutations of the row and column weight
sequences, and present some positive results, based on convex binary
arrays and composition constructions.
\end{abstract}

\section{Introduction}
Given an $m\times n$ matrix $A$ with entries from $\Z$, we refer to the number of nonzero entries of a row (or a column) of $A$ as its \emph{weight}. We also denote by $w_r(A)$ and $w_c(A)$ the sequences of row and column weights of $A$, respectively. 
Clearly, if $w_r(A) = (h_1, \ldots, h_m)$ and  $w_c(A) = (k_1, \ldots,k_n)$, we must have that $\textstyle{\sum_{i=1}^{m} h_i=\sum_{j=1}^{n} k_j}$.
We denote by $\Phi(m,n)$ and $\Psi(m,n)$ the sets of all $m\times n$ arrays with entries from $\{0,1\}$ and $\{0, \pm 1\}$, respectively; an array in 
$\Phi(m,n)$ is usually called a \emph{binary array}. Also,
$\Phi(m,n; \mathbf{h}, \mathbf{k})$ and $\Psi(m,n; \mathbf{h}, \mathbf{k})$ denote the subsets of $\Phi(m,n)$ and $\Psi(m,n)$, respectively, consisting of the arrays with row-weight sequence $\mathbf{h}$ and column-weight sequence $\mathbf{k}$. 

An array $A\in \Psi(m,n)$ is said to be a \emph{near alternating sign matrix} (NASM) if the nonzero entries of each row and each column alternate in sign. 
Letting $\mathbf{h}=w_r(A)$ and $\mathbf{k}=w_c(A)$, we say that
$A$ is a NASM$(m,n; \mathbf{h}, \mathbf{k})$
or simply a NASM$(m,n)$. 
Finally, if $\mathbf{h}=(h, \ldots, h)$ and $\mathbf{k}=(k, \ldots, k)$, we say that $A$ is uniform and write NASM$(m,n; h,k)$. 

Since we are concerned with the existence of a NASM having prescribed row and column weights, irrespective of their ordering, 
we write NASM$(m,n;$ $[\mathbf{h}], [\mathbf{k}])$ 
to denote any NASM$(m,n; \mathbf{h}', \mathbf{k}')$ 
such that $\mathbf{h}'$ is a permutation of $\mathbf{h}$ and $\mathbf{k}'$ is a permutation of $\mathbf{k}$.

\begin{ex}
Here is a \NASM$(3,3;(3,3,2),(3,3,2))$.
\[
A=
\begin{pmatrix}
 1&-1& 1\\
-1& 1&-1\\
 1&-1& 0
\end{pmatrix}
\]
\end{ex}

Near alternating sign matrices were first considered in
\cite{BruKim} (as a generalization of the classic alternating sign matrices), although the terminology was introduced in \cite{MT} were the authors (motivated by the application in constructing generalized Heffter arrays) were interested in building them with given row-weight and column-weight sequences. We recall the following well-known result by Gale and Ryser \cite[Theorem 7.7.4]{DJ} on the existence of a binary array with given row and column weights.

\begin{theorem}\label{GR}
	  Let $\hh=(h_1, \ldots, h_m)$ and $\kk=(k_1, \ldots,k_n)$ be two sequences of positive integers with $0\leq h_i\leq n$, $0\leq k_j\leq m$, and such that $\textstyle{\sum_{i=1}^{m} h_i=\sum_{j=1}^{n} k_j}$. 
	  There exists an $m\times n$ binary array 
	  $A$ with $w_r(A)=\hh$ and $w_c(A)=\kk$ if~and~only~if 
  \begin{equation}\label{GHA:nec2}
    \text{$\textstyle \sum_{i=1}^m \min (h_i,u) \geq \sum_{j=1}^u k'_j$, for every $1\leq u\leq n$}.
  \end{equation}
  where $(k'_1, \ldots, k'_n)$ is the decreasing reordering of $\kk$.
\end{theorem}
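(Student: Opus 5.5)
The Gale--Ryser theorem can be proved by viewing a binary array as a bipartite graph and applying max-flow/min-cut. Build a network with source $s$, row vertices $r_1,\dots,r_m$, column vertices $c_1,\dots,c_n$ and sink $t$. Put an arc $s\to r_i$ of capacity $h_i$, an arc $r_i\to c_j$ of capacity $1$ for every pair $(i,j)$, and an arc $c_j\to t$ of capacity $k_j$. An $m\times n$ binary array $A$ with $w_r(A)=\hh$ and $w_c(A)=\kk$ is the same thing as an integral flow of value $N=\sum_i h_i=\sum_j k_j$, via $a_{ij}=$ flow on $r_i\to c_j$. By the integral max-flow theorem, such an array exists if and only if every $s$--$t$ cut has capacity at least $N$.

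\emph{Necessity.} If $A$ exists, fix $u$ and let $J$ index $u$ columns of largest weights $k'_1,\dots,k'_u$. Count the ones of $A$ lying in the columns of $J$. Row $i$ contributes at most $\min(h_i,u)$ of them, since it has $h_i$ ones in total and at most $u$ in these columns. Hence $\sum_j k'_j\le\sum_i\min(h_i,u)$ with $j$ running from $1$ to $u$, which is \eqref{GHA:nec2}.

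\emph{Sufficiency.} Take a cut whose source side is $\{s\}\cup\{r_i: i\in I\}\cup\{c_j:j\in J\}$, and write $u=|J|$. Its capacity is
\[
\sum_{i\notin I}h_i+|I|\,(n-u)+\sum_{j\in J}k_j .
\]
Two simplifications reduce this to the Gale--Ryser inequality.
\begin{itemize}
\item For fixed $J$, the capacity is minimized by putting $i$ in $I$ exactly when $n-u<h_i$. The row part then equals $\sum_i\min(h_i,n-u)$.
\item For fixed $u$, the column part is minimized by taking $J$ to be the $u$ columns of smallest weight. Using $\sum_{j\in J}k_j=N-\sum_{j\notin J}k_j$, the complementary set consists of the $n-u$ columns of largest weight.
\end{itemize}
So the cut condition becomes $\sum_i\min(h_i,v)\ge\sum_j k'_j$ over $1\le j\le v$, where $v=n-u$ ranges over $0,\dots,n$.

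The case $v=0$ is trivial, since both sides are $0$. The cases $1\le v\le n$ are exactly \eqref{GHA:nec2}. Therefore every cut has capacity at least $N$, a flow of value $N$ exists, and its integral version is the required array.

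The main obstacle is the cut computation: checking that the minimization over $I$ and $J$ decouples as claimed and that the extreme value $v=0$ is harmless. An alternative is the classical constructive proof. Fill the rows greedily, placing each row's ones in the columns with the largest remaining demand, and show by an exchange argument that \eqref{GHA:nec2} is preserved after each row is placed. The flow argument avoids that bookkeeping.
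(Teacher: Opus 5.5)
Your proof is correct, but the paper itself gives no proof of this statement to compare it with. It recalls the Gale--Ryser theorem as a classical result quoted from the literature, so your argument is a genuinely different (and self-contained) route.

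\textbf{What you prove.} Necessity follows by counting the ones in the $u$ heaviest columns, since each row contributes at most $\min(h_i,u)$ of them. For sufficiency you encode the array as an integral $s$--$t$ flow of value $N$ and check the min-cut condition.

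\textbf{The cut computation.} For a source side $\{s\}\cup\{r_i:i\in I\}\cup\{c_j:j\in J\}$, the part of the capacity involving $I$ depends on $J$ only through $u=|J|$. So the minimisation decouples into $\sum_i\min(h_i,n-u)$ plus the sum of the $u$ smallest column weights, which equals $N-\sum_{j\le n-u}k'_j$. Requiring every cut to have capacity at least $N$ is then exactly \eqref{GHA:nec2} with $v=n-u$, and the value $v=0$ is vacuous. Since the cut $\{s\}$ has capacity $N$, the maximum flow has value exactly $N$ and saturates every source and sink arc. Its integral version therefore gives the required binary array.

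\textbf{Comparison with the constructive proof.} Ryser's greedy construction needs an exchange argument to keep the inequalities valid row by row. Your approach replaces that bookkeeping with an appeal to the integral max-flow/min-cut theorem. It also never uses the hypotheses $h_i\le n$ and $k_j\le m$, which are in fact redundant:
\begin{itemize}
\item \eqref{GHA:nec2} at $u=n$, together with $\sum_i h_i=\sum_j k_j$, forces $h_i\le n$;
\item \eqref{GHA:nec2} at $u=1$ gives $k'_1\le\sum_i\min(h_i,1)\le m$.
\end{itemize}

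\textbf{How the paper uses the result.} The paper only invokes the ``only if'' direction, to obtain condition~3 of Lemma~\ref{nec} from the absolute value of a NASM. For that purpose your one-line counting argument is all that is needed.
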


  Taking the entries of a NASM$(m,n;\hh,\kk)$ in absolute value yields a binary array with the same row and column weights. Therefore, by Theorem \ref{GR}, it follows that $\hh$ and $\kk$ must satisfy condition \eqref{GHA:nec2}. In the following corollary, we collect the trivial necessary conditions for the existence of a NASM$(m,n;\hh,\kk)$.
  
  \begin{lemma} \label{nec}
  If there exists a NASM$(m,n;\hh,\kk)$, where $\hh=(h_1, \ldots, h_m)$ and $\kk=(k_1, \ldots,k_n)$, then 
  \begin{enumerate}
    \item $0\leq h_i \leq n$ and $0\leq k_j \leq m$, for every $1\leq i\leq m$ and 
    $1\leq j\leq n$,
    \item $\textstyle{\sum_{i=1}^{m} h_i=\sum_{j=1}^{n} k_j}$, and
    \item $\textstyle \sum_{i=1}^m \min (h_i,u) \geq \sum_{j=1}^u k'_j$, for every $1\leq u\leq n$, where $(k'_1, \ldots, k'_n)$ is the decreasing reordering of $\kk$.
  \end{enumerate}
  \end{lemma}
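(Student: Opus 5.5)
The plan is to pass from the given NASM to a binary array and then apply Theorem \ref{GR}. Let $A=(a_{ij})$ be a NASM$(m,n;\hh,\kk)$, and let $|A|=(|a_{ij}|)$ be the array of absolute values of its entries. Since every entry of $A$ lies in $\{0,\pm1\}$, the array $|A|$ lies in $\Phi(m,n)$. Moreover, $|a_{ij}|\neq 0$ if and only if $a_{ij}\neq 0$, so each row and each column of $|A|$ has exactly as many nonzero entries as the corresponding row or column of $A$. Hence $w_r(|A|)=\hh$ and $w_c(|A|)=\kk$. Note that the alternating sign property is not used anywhere; the argument works for any array in $\Psi(m,n;\hh,\kk)$.

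Conditions 1 and 2 follow directly from counting. Row $i$ has $n$ entries, so $0\le h_i\le n$, and similarly $0\le k_j\le m$. Counting the nonzero entries of $A$ first by rows and then by columns gives $\sum_i h_i=\sum_j k_j$.

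For condition 3, I would apply the ``only if'' direction of Theorem \ref{GR} to the binary array $|A|$. Its hypotheses are exactly conditions 1 and 2. One small point: Theorem \ref{GR} speaks of sequences of ``positive'' integers, but it also allows $0\le h_i$, so zero weights are permitted. Alternatively, one can first delete the zero rows and columns, which changes neither side of \eqref{GHA:nec2} in a harmful way. Theorem \ref{GR} then gives \eqref{GHA:nec2}, which is condition 3.

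The only thing needing care is this positivity issue. If one prefers not to rely on Theorem \ref{GR} there, the necessity direction of condition 3 can be proved directly by double counting. Let $C$ be the set of $u$ columns with the largest weights $k'_1,\dots,k'_u$. The number of ones of $|A|$ lying in the columns of $C$ equals $\sum_{j=1}^u k'_j$. Counting the same ones row by row, row $i$ contributes at most $\min(h_i,u)$ of them, since it has $h_i$ ones in total and meets $C$ in only $u$ positions. Summing over the rows gives the inequality.
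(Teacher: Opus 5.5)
Your proposal is correct and matches the paper's argument. Like the paper, you take entrywise absolute values to get a binary array in $\Phi(m,n;\hh,\kk)$, obtain conditions 1 and 2 by counting, and derive condition 3 from the necessity direction of Theorem~\ref{GR}; your backup double-counting argument for condition 3 is also valid and neatly avoids the ``positive integers'' wording in the statement of Theorem~\ref{GR}.
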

    
  In this paper, we show that the necessary conditions stated in Lemma~\ref{nec} are not sufficient for the existence of a $\mathrm{NASM}(m,n;\mathbf{h},\mathbf{k})$ with prescribed row and column weight sequences $(\mathbf{h},\mathbf{k})$. To this end, we translate the existence problem for NASMs into the problem of determining the existence of a binary array with chromatic number~2 (see Section~2).

We then raise the question of whether the conditions of Lemma~\ref{nec} might nevertheless be sufficient for the existence of a $\mathrm{NASM}(m,n;[\mathbf{h}],[\mathbf{k}])$. In Section~3, we recall the notion of frame for a NASM and use it to develop some composition constructions. We conclude the paper with some remarks and open problems.

\section{NASMs as binary arrays with chromatic number $2$}
Given a binary array $A=(a_{i,j})\in \Phi(m,n)$, we define a graph $\Gamma(A)$ as follows.
The vertices of $\Gamma(A)$ are the entries of $A$ equal to $1$, that is, 
\[
V(\Gamma(A)) = \{(i,j)\in \{1,\ldots,m\}\times\{1,\ldots,n\}\mid a_{ij}=1\}.\]
Two vertices
are adjacent whenever the corresponding entries of $A$ are consecutive
nonzero entries in the same row or in the same column. Equivalently, after deleting the zero entries of $A$, we join two ones if
they are consecutive horizontally or vertically.

\begin{ex} A binary array $A$ and its associated graph $\Gamma(A)$.\\
\begin{minipage}[c]{0.48\textwidth}\centering
  $A=\left(
\begin{matrix}{}
1 & 1 & 1 & 0\\[2pt]
0 & 1 & 0 & 1\\[2pt]
1 & 0 & 1 & 1\\[2pt]
1 & 1 & 1 & 0\\ 
\end{matrix}\right)$
\end{minipage}
\begin{minipage}[c]{0.48\textwidth}\centering
\includegraphics[scale=0.4]{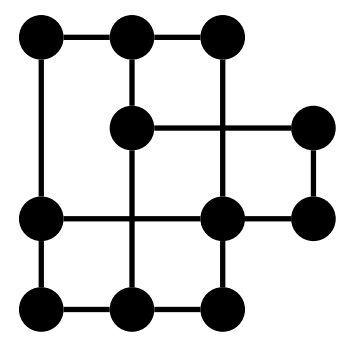}
\end{minipage}
\end{ex}

We say that 
a binary array $A$ is  $c$-colorable if $\Gamma(A)$ admits a
proper vertex-coloring with $c$ colors. 
Recall that the minimum $c$ for which $\Gamma(A)$ is $c$-colorable is the chromatic number
$\chi(\Gamma(A))$ of $\Gamma(A)$. Also, $\Gamma(A)$ is bipartite if and only if 
$\chi(\Gamma(A))\leq 2$.

We refer to $\chi(\Gamma(A))$ as the chromatic number $\chi(A)$ of $A$, and show that the chromatic number of a binary array is at most $3$.

\begin{lemma}
If $A$ is a binary array, then $\chi(A)\leq 3$.
\end{lemma}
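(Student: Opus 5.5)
The plan is a greedy colouring argument: order the vertices of $\Gamma(A)$ so that each vertex has at most two neighbours appearing before it. Greedy colouring along that order then never needs more than $3$ colours. Equivalently, I would show that $\Gamma(A)$ is $2$-degenerate.

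First I would describe the edges of $\Gamma(A)$ concretely. The ones in row $i$, read from left to right, form a path in $\Gamma(A)$. The ones in column $j$, read from top to bottom, form another path. Every edge of $\Gamma(A)$ comes from one of these row paths or column paths. Hence each vertex $(i,j)$ has at most four neighbours:
\begin{itemize}
\item the nearest one to its left in row $i$;
\item the nearest one to its right in row $i$;
\item the nearest one above it in column $j$;
\item the nearest one below it in column $j$.
\end{itemize}

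Next I would fix the lexicographic (reading) order on $V(\Gamma(A))$: $(i,j)$ precedes $(i',j')$ if $i<i'$, or if $i=i'$ and $j<j'$. Consider a vertex $(i,j)$. Its right neighbour lies in the same row with a larger column index, so it comes later. Its lower neighbour lies in a larger row, so it also comes later. So among the neighbours of $(i,j)$, only the left neighbour and the upper neighbour (when they exist) precede it. Thus each vertex has at most two earlier neighbours.

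Finally, I would colour the vertices with colours $\{1,2,3\}$ in this order. Each vertex gets a colour not used by its at most two already-coloured neighbours, which is always possible since three colours are available. The result is a proper $3$-colouring of $\Gamma(A)$, so $\chi(A)\le 3$. There is no real obstacle. The only point needing care is checking that adjacency really is limited to the nearest nonzero neighbours in the row and column, so that no earlier vertex besides the left and upper neighbours can be adjacent; this follows directly from the definition of $\Gamma(A)$.
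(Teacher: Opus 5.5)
Your proposal is correct and is essentially the paper's argument. The paper colours $\Gamma(A)$ column by column, going down each column. That is greedy colouring in column-major order, in which each vertex's only earlier neighbours are its left and upper neighbours, which is your row-major $2$-degeneracy argument with the order transposed.
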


\begin{proof}
Let $A\in \Phi(m,n)$. We describe a coloring procedure that produces a proper vertex-coloring of $\Gamma(A)$ using at most three colors.
For each $j\in\{1,\ldots,n\}$, let $\Gamma_j$ denote the subgraph of $\Gamma(A)$ induced by the vertices corresponding to the nonzero entries of $A$ in the first $j$ columns. We proceed by induction on $j$.

Since $\Gamma_1$ is a path, it admits a proper $2$-coloring. Assume now that $\Gamma_u$ has been properly colored with at most three colors, for some $1\leq u<n$. Consider the graph
$\Gamma'=\Gamma_{u+1}\setminus E(\Gamma_u)$,
obtained from $\Gamma_{u+1}$ by removing all edges of $\Gamma_u$.
All vertices of $\Gamma'$ are already colored, except those in
$W=V(\Gamma_{u+1})\setminus V(\Gamma_u)$,
which correspond to the nonzero entries in column $u+1$ of $A$. 
The subgraph induced by $W$ is a path. Moreover, each vertex of $W$ is adjacent to at most one previously colored vertex, namely the closest nonzero entry to its left in the same row, if such an entry exists. Hence, before coloring the vertices of $W$, each of them has at least two admissible colors among the three available colors.

We now color the vertices of $W$ sequentially along the path induced by $W$. At each step, the current vertex has at most one previously colored neighbor in $W$. Since it has at least two admissible colors, we can choose one that is different from the color assigned to that previously colored neighbor. Thus the coloring remains proper.
This extends the coloring of $\Gamma_u$ to a proper $3$-coloring of $\Gamma_{u+1}$. By induction, $\Gamma(A)$ admits a proper coloring with at most three colors, and therefore $\chi(A)\leq 3$.

\end{proof}

The next theorem is the fundamental link between near alternating sign
matrices and binary arrays.

\begin{theorem}\label{thm:bipartite}
There exists a \NASM$(m,n;\h,\k)$ if and only if there exists a binary
array $A\in\Phi(m,n;\h,\k)$ such that $\Gamma(A)$ is bipartite.
\end{theorem}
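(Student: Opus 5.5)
The proof goes in both directions through the map $A\mapsto |A|$ (entrywise absolute value), using the fact that the sign pattern of a \NASM{} is exactly a proper $2$-coloring of $\Gamma(|A|)$.

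For the forward direction, let $N=(n_{ij})$ be a \NASM$(m,n;\h,\k)$ and set $A=|N|$, the binary array with $a_{ij}=|n_{ij}|$. As noted before Lemma~\ref{nec}, $A\in\Phi(m,n;\h,\k)$, and the vertices of $\Gamma(A)$ are exactly the positions of the nonzero entries of $N$. Define $c(i,j)=n_{ij}\in\{1,-1\}$ on $V(\Gamma(A))$. An edge of $\Gamma(A)$ joins two nonzero entries of $N$ that are consecutive among the nonzero entries of some row or column. Since the nonzero entries of every row and column of $N$ alternate in sign, the endpoints of each edge get different values of $c$. So $c$ is a proper $2$-coloring and $\Gamma(A)$ is bipartite.

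For the converse, let $A\in\Phi(m,n;\h,\k)$ have $\Gamma(A)$ bipartite, and fix a proper coloring $c:V(\Gamma(A))\to\{1,-1\}$. If some component is a single vertex, color it arbitrarily. Define $N=(n_{ij})$ by $n_{ij}=c(i,j)$ when $a_{ij}=1$ and $n_{ij}=0$ otherwise. Then $N\in\Psi(m,n)$ has the same zero pattern as $A$, so $w_r(N)=\h$ and $w_c(N)=\k$. It remains to check alternation. Let $(i,j_1),\dots,(i,j_t)$ with $j_1<\dots<j_t$ be the nonzero entries of row $i$. By the definition of $\Gamma(A)$, consecutive ones $(i,j_s)$ and $(i,j_{s+1})$ are adjacent, so $c(i,j_s)\neq c(i,j_{s+1})$. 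With only two colors, the signs therefore alternate along the row. Columns are handled in the same way, so $N$ is a \NASM$(m,n;\h,\k)$.

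No step here is a real obstacle. The one point that needs care is that the alternation condition involves only consecutive nonzero entries of a line. This matches the edge set of $\Gamma(A)$ exactly, so proper colorings correspond bijectively to valid sign assignments on a fixed support. Once that correspondence is stated, both directions are direct translations.
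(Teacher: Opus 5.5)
Your proof is correct and follows the paper's argument essentially verbatim. For the forward direction you take entrywise absolute values and read the signs as a proper $2$-coloring of $\Gamma(A)$; for the converse you assign $+1$ and $-1$ to the two color classes, and your explicit check that edges of $\Gamma(A)$ are exactly consecutive nonzero entries in a line spells out a step the paper leaves implicit.
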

\begin{proof}
Suppose first that $M$ is a \NASM$(m,n;\h,\k)$, and let $A$ be the binary array obtained by taking the entried of $M$ in absolute value. Then
$A\in\Phi(m,n;\h,\k)$. Color each vertex of $\Gamma(A)$ with a $+$ or $- $ according to the
sign of the corresponding entry of $M$. Since the nonzero entries of every
row and every column of $M$ alternate in sign, two adjacent vertices of
$\Gamma(A)$ receive different colors. Hence $\Gamma(A)$ has a proper vertex coloring with at most two colors, that is, $\Gamma(A)$ is bipartite.

Conversely, assume that $A\in\Phi(m,n;\h,\k)$ and that $\Gamma(A)$ is
bipartite. Choose a bipartition of the vertex set of $\Gamma(A)$. Replace
the entries of $A$ corresponding to one part by $+1$, and those
corresponding to the other part by $-1$. Leave all zero entries unchanged.
The resulting array belongs to $\Psi(m,n)$, has row and column weights
$\h$ and $\k$, and its nonzero entries alternate in every row and every
column. Hence it is a \NASM$(m,n;\h,\k)$.
\end{proof}

Theorem~\ref{thm:bipartite} shows that, besides the existence of a binary 
array $A$ with the prescribed weights, one needs the graph associated to $A$ is bipartite in order to build a NASM with the same size and weights of $A$.  The following example shows that the
Gale--Ryser conditions alone are not sufficient for the existence of a NASM with prescribed row and column weight sequences.

\begin{ex}
Let $\h=\k=(3,2,3)$.
There is a unique binary array in $\Phi(3,3; \h,\k)$, namely
$A=
\begin{pmatrix}
1&1&1\\
1&0&1\\
1&1&1
\end{pmatrix}$, whose associated graph is 

\begin{minipage}[c]{.5\textwidth}
  \ \hfill $\Gamma(A) =$
\end{minipage}
\begin{minipage}[c]{.5\textwidth}
  \includegraphics[scale=0.25]{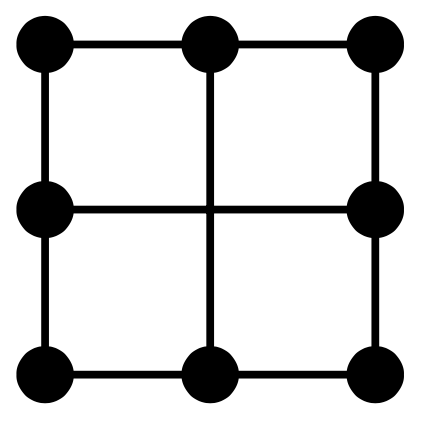}
\end{minipage}
\vspace{4mm}

\noindent
Since $\Gamma(A)$ clearly contains a $5$-cycle,  it is not bipartite. Therefore,
by Theorem~\ref{thm:bipartite}, there is no
\NASM$(3,3;(3,2,3),(3,2,3))$.

On the other end, the matrix
\[
A=
\begin{pmatrix}
 1&-1& 1\\
-1& 1&-1\\
 1&-1& 0
\end{pmatrix}
\]
is a NASM$(3,3;[\h],[\k])$ whose row-weight and column-weight sequences are both equal to $(3,3,2)$, a permutation of  $\h=\k$.
\end{ex}

The above example can be readily extended to yield an infinite family of prescribed row and column weight sequences, say $(\h,\k)$, satisfying Lemma \ref{nec}, for which no $\mathrm{NASM}(m,n;\mathbf{h},\mathbf{k})$ exists. However, if one disregards the order in which the weights occur among the rows and columns, a NASM with the prescribed multisets of row and column weights may still exist, even when non-existence has been established for particular orderings of those multisets. In other words, the necessary conditions stated in Lemma~\ref{nec} may still be sufficient for the existence of a
\[
\mathrm{NASM}(m,n;[\mathbf{h}],[\mathbf{k}]).
\]
This motivates the following conjecture.

\begin{guess}\label{conj:main}
Let $\h$ and $\k$ be two sequences of positive integers  satisfying the conditions of Lemma \ref{nec}. Then there exists a \NASM$(m,n;[\h],[\k])$.
\end{guess}

This conjecture holds in some special cases: for instance, this is known when
$\h$ and $\k$ are constant \cite[Theorem 3.6]{MT} or when $\h$ and $\k$ satisfy a very specific set of conditions, see \cite[Theorem 3.8]{MT}.

In the remaining part of this section, we show that ``convex'' arrays  immediately yield near alternating sign matrices. 
We recall that a binary array is called \emph{convex} if the ones are consecutive in each
row and in each column. 

\begin{ex} Here is a convex array in $\Phi(m,n; (2,4,1), (1,3,2,1))$.
\[A=
\begin{pmatrix}
0&1&1&0\\
1&1&1&1\\
0&1&0&0
\end{pmatrix}
\]
\end{ex}

\begin{theorem}\label{thm:convex}
If there exists a convex array
$A\in\Phi(m,n;\h,\k)$, then there exists a \NASM$(m,n;\h,\k)$.
\end{theorem}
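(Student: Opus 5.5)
By Theorem~\ref{thm:bipartite}, it suffices to show that $\Gamma(A)$ is bipartite whenever $A\in\Phi(m,n;\h,\k)$ is convex. The plan is to write down an explicit $2$-coloring: color the vertex $(i,j)$ of $\Gamma(A)$ according to the parity of $i+j$. In other words, we take the checkerboard sign pattern and set $M=(m_{ij})$ with $m_{ij}=(-1)^{i+j}a_{ij}$. This $M$ clearly lies in $\Psi(m,n)$ and has $w_r(M)=\h$ and $w_c(M)=\k$. One could then apply Theorem~\ref{thm:bipartite}, or simply check directly that $M$ is a \NASM.

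The key step uses convexity to identify the edges of $\Gamma(A)$. In row $i$ the ones occupy a set of consecutive columns $j, j+1,\ldots,j+h_i-1$. Two entries are consecutive nonzero entries of that row exactly when they are $(i,j')$ and $(i,j'+1)$ with both equal to $1$. Likewise, in each column the consecutive nonzero entries are vertically adjacent cells $(i',j)$ and $(i'+1,j)$. So every edge of $\Gamma(A)$ joins two cells at grid distance $1$, and the parity of $i+j$ differs at the two endpoints. Hence the checkerboard coloring is proper. Equivalently, the nonzero entries of each row and each column of $M$ alternate in sign, because along a run of consecutive ones the factor $(-1)^{i+j}$ alternates.

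I expect no real obstacle here. The only point requiring care is the observation that, in a convex array, ``consecutive nonzero entries'' coincides with ``adjacent positions.'' This fails without convexity, since a gap of even length would break the parity argument, and that is exactly why the hypothesis is needed. Rows or columns of weight $0$ or $1$ impose no condition and need no separate treatment.
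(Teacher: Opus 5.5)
Your proof is correct and is essentially the paper's argument. The paper observes that convexity makes $\Gamma(A)$ an induced subgraph of the grid graph $\Gamma(J)$ (a Cartesian product of two paths), which is bipartite, and your checkerboard coloring $(-1)^{i+j}$ is exactly that grid bipartition written out explicitly. One small wording point in your closing aside: if by ``gap'' you mean the number of intervening zeros, it is an \emph{odd} gap (two consecutive ones at even distance) that breaks the parity argument.
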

\begin{proof}
Let $J$ be the $m\times n$ all-one matrix. The graph $\Gamma(J)$ is the
Cartesian product of two paths, and is therefore
bipartite.
Since $A$ is convex, the graph $\Gamma(A)$ is an induced subgraph of
$\Gamma(J)$. Hence $\Gamma(A)$ is bipartite. The result then follows from
Theorem~\ref{thm:bipartite}.
\end{proof}

\begin{ex} Here is a NASM obtained by properly 2-coloring the vertices of the graph $\Gamma(A)$ associated to the array $A$ in the previous example.
\[M=
\begin{pmatrix}
0& 1&-1& 0\\
1&-1& 1&-1\\
0& 1& 0& 0
\end{pmatrix}
\]
\end{ex}

Given multisets of row and column weights, the problem of
determining whether there exists a convex binary array having these row and column weights is still open and belongs to the area of discrete tomography: it is indeed closely related to the reconstruction of ``$hv$-convex polyominoes'' from their horizontal and vertical projections (see, for instance, \cite{DLetalia, DuFro}). Note that without the convexity constraint, the existence problem is completely solved by the Gale--Ryser theorem.

\section{A composition construction}
We recall that $\Phi(m,n)$ and $\Psi(m,n)$ denote the sets of all $m\times n$ arrays with entries from $\{0,1\}$ and $\{0, \pm 1\}$, respectively.
Given a matrix $A\in \Psi(m,n)$, we denote by $f_{i}(A)$ ($i\in \Z_4$) the sequence associated to $A$
defined as follows:
\begin{enumerate}
  \item $f_{0}(A) = (a_1, \ldots, a_m)$, where each $a_i$ is the first nonzero entry in the $i$-th row of $A$ if there is any, otherwise, it is zero;
  \item $f_{2}(A) = (a_1, \ldots, a_m)$, where each $a_i$ is the last nonzero entry in the $i$-th row of $A$ if there is any, otherwise, it is zero;
  \item $f_{1}(A) = f_{0}(A^t)$ and $f_{3}(A) = f_{2}(A^t)$.
\end{enumerate}
The \emph{frame} of $A$ (a concept introduced in \cite{MT}) is the sequence 
\[\mathcal{F}(A)=(f_{0}(A), f_{1}(A), f_{2}(A), f_{3}(A)).
\] 
\begin{ex} Here we have a NASM $A$ and a representation of its frame,
\begin{center}
$ A = 
  \begin{pmatrix}
    + &  0 & - & +\\
    - &  + & 0 & -\\
    0 &  - & + & 0\\
  \end{pmatrix}$ \;\;\;\;\; 
\begin{tabular}{c|cccc|c}
\multicolumn{1}{c}{$f_{1}(A)$} & \multicolumn{1}{c}{$+$} &  $+$ & $-$ & $+$ & $f_{2}(A)$\\ \cline{1-5}
$+$ &  $+$ &    0       & $-$ & $+$ & $+$\\ 
$-$ &  $-$ &  $+$       &   0 & $-$ & $-$\\ 
$-$ &     0      &  $-$ & $+$ & 0   & $+$\\ \cline{2-6}
$f_{0}(A)$ &  $-$ &  $-$ & $+$ & \multicolumn{1}{c}{$-$} & \multicolumn{1}{c}{$f_{3}(A)$}\\ 
\end{tabular}
\end{center}
where
$f_{0}(A) = (+, -, -)$,
$f_{2}(A) = (+, -, +)$,
$f_{1}(A) = (+, +, -, +)$,
$f_{3}(A) = (-, -, +, -)$.
\end{ex}

We also denote by $\mathcal{F}_0(A)$ and $\mathcal{F}_1(A)$ the two arrays of size $m\times 2$ and $2\times n$, respectively, defined as follows:
\[
  \mathcal{F}_0(A)= (f_0(A)^t, f_2(A)^t) \;\;\;\text{and}\;\;\; \mathcal{F}_1(A)= (f_1(A), f_3(A)) 
\]

Given a sequence $\hh=(h_1, \ldots, h_n) \in \mathbb{N}_0^n$, set
\[
(-1)^\hh = 
((-1)^{h_1}, \ldots, (-1)^{h_n})]\;\;\;\text{and}\;\;\;
\hh+1 = (h_1+1, \ldots, h_n+1).
\]
The following lemma is straightforward.
\begin{lemma}\label{lem}
If $A\in NASM(m,n; \hh,\kk)$, where $\hh=(h_1, \ldots, h_m)$ and $\kk=(k_1, \ldots, k_n)$, then
\[
  f_0(A) = (-1)^{\hh+1} f_{2}(A),\;\;\; \text{and}\;\;\;
  f_1(A) = (-1)^{\kk+1} f_{3}(A).
\] 
\end{lemma}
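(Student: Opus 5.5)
The statement is a row-by-row (and column-by-column) parity observation, so I will prove it one row at a time. The column identity then follows by applying the row argument to the transpose $A^t$.

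Fix a row index $i$ and let $h_i$ be the weight of the $i$-th row of $A$. First dispose of the degenerate case $h_i=0$. Then the row has no nonzero entry, so by definition the $i$-th components of $f_0(A)$ and $f_2(A)$ are both $0$, and the identity $0=(-1)^{h_i+1}\cdot 0$ holds trivially.

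Now suppose $h_i\geq 1$, and list the nonzero entries of the row in order as $x_1,\ldots,x_{h_i}$. By definition the $i$-th component of $f_0(A)$ is $x_1$, and the $i$-th component of $f_2(A)$ is $x_{h_i}$.

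\begin{itemize}
\item Since $A$ is a NASM, the entries alternate in sign and lie in $\{\pm1\}$, so $x_{t+1}=-x_t$ for every $t$.
\item Induction on $t$ then gives $x_t=(-1)^{t-1}x_1$.
\item Taking $t=h_i$ gives $x_{h_i}=(-1)^{h_i-1}x_1=(-1)^{h_i+1}x_1$.
\item Since $(-1)^{h_i+1}$ is its own inverse, this is equivalent to $x_1=(-1)^{h_i+1}x_{h_i}$.
\end{itemize}

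Reading this componentwise over all rows yields $f_0(A)=(-1)^{\hh+1}f_2(A)$, with the product taken entrywise.

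For the columns, observe that $A^t$ is a NASM$(n,m;\kk,\hh)$. By definition $f_1(A)=f_0(A^t)$ and $f_3(A)=f_2(A^t)$. Applying the row statement to $A^t$ therefore gives $f_1(A)=(-1)^{\kk+1}f_3(A)$.

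There is no real obstacle here. The only points needing care are the zero-row case and reading the product of sequences entrywise.
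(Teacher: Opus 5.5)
Your proof is correct. The paper states this lemma without proof, calling it straightforward, and your argument is exactly the routine verification it leaves to the reader: row-by-row sign alternation giving $x_{h_i}=(-1)^{h_i+1}x_1$, a separate check for zero rows, and passage to $A^t$ for the column identity.
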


Given two $m\times n$ arrays, say $M_1$ and $M_2$, we denote by  $M_1\circ M_2$ the 
Hadamard-Schur (entrywise) product of $M_1$ and 
$M_2$. Furthermore, letting $A=(a_{ij}) \in \Psi(m,n)$, we say that $A$ is even (resp. odd) if each of its rows and columns has even (resp. odd) weight. Also, we say that 
$a_{ij}$ precedes $a_{h,k}$ if one of the following conditions hold:
\begin{enumerate}
  \item $i=h$, $a_{ij}\neq 0\neq a_{ik}$ and $a_{i\ell}=0$ for every $\ell\in[j+1,k-1]$, or
  \item $j=k$, $a_{ij}\neq0\neq a_{hj}$ and $a_{\ell j}=0$ for every $\ell\in[i+1,h-1]$.
\end{enumerate}

\begin{theorem}\label{main2}
Let $\mathbb{A}=(A_{ij})$ be an $m \times n$ block matrix where each block  
$A_{ij}\in \Psi(m_i,n_j)$ and let 
$B=(b_{ij})\in \Psi(m,n)$. Then, $\mathbb{A}\circ B$ is a NASM if and only if the following conditions hold.
\begin{enumerate}
  \item each $A_{i,j}$ is a NASM,
  \item $b_{i,\alpha}f_2(A_{i,\alpha}) = -b_{i,\beta}f_0(A_{i,\beta})$, whenever 
  $b_{i,\alpha}$ precedes $b_{i,\beta}$,
  \item $b_{\alpha, j}f_3(A_{\alpha,j}) = -b_{\beta, j}f_1(A_{\beta, j})$, whenever 
  $b_{\alpha, j}$ precedes $b_{\beta, j}$.
\end{enumerate}
\end{theorem}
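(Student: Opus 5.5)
The plan is to interpret $\mathbb{A}\circ B$ as the block matrix whose $(i,j)$ block is $b_{ij}A_{ij}$. A block with $b_{ij}=0$ is identically zero, and a block with $b_{ij}=\pm1$ equals $\pm A_{ij}$. The strategy is to reduce the alternating condition on a full row (resp.\ column) of $\mathbb{A}\circ B$ to two pieces: alternation inside each nonzero block, and correct sign transitions at the junctions between consecutive nonzero blocks. Throughout I will read condition 1 as required only for blocks with $b_{ij}\neq 0$, since a block multiplied by $0$ is invisible. (If the statement intends all blocks, then sufficiency is the same argument and necessity for zeroed blocks cannot hold, so I would note this convention explicitly.)

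Fix a block row $i$ and a row $r$ of height $m_i$ inside it. The nonzero entries of row $r$ of $\mathbb{A}\circ B$ are obtained by reading, from left to right, the nonzero entries of row $r$ of $b_{i,j}A_{i,j}$ for those $j$ with $b_{ij}\neq0$. Multiplying by $\pm1$ does not affect alternation. Hence this sequence alternates if and only if two things hold. First, each segment alternates. Second, for every pair of consecutive nonzero segments, the last entry of one is the negative of the first entry of the next.

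The only subtlety is that row $r$ of some $A_{i,j}$ may be entirely zero. Then the "next" nonzero segment after $A_{i,\alpha}$ in that particular row is not necessarily the one in the block $\beta$ with $b_{i,\alpha}$ preceding $b_{i,\beta}$. I will handle this by using the convention that $f_0,f_2$ record $0$ for empty rows. Condition 2, as an equality of sequences, is then imposed coordinatewise. A zero on one side forces a zero on the other, so both the row of $A_{i,\alpha}$ and the row of $A_{i,\beta}$ are empty or both are nonempty. Chaining this along the preceding relation, each row is either empty in every nonzero block of block row $i$ (no constraint) or nonempty in all of them. In the latter case, consecutive nonzero segments are exactly those of preceding blocks, and the junction condition is the $r$-th coordinate of $b_{i,\alpha}f_2(A_{i,\alpha})=-b_{i,\beta}f_0(A_{i,\beta})$.

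For sufficiency, then, conditions 1 and 2 give alternation of every row, and conditions 1 and 3 (the transpose argument with $f_1,f_3$) give alternation of every column. For necessity, a row or column of a nonzero block is a contiguous sub-run, up to the sign $b_{ij}$, of the nonzero sequence of a row or column of $\mathbb{A}\circ B$, so each block alternates. Moreover, for preceding $b_{i,\alpha},b_{i,\beta}$, whenever row $r$ is nonempty in both blocks, the last entry in $A_{i,\alpha}$ and the first entry in $A_{i,\beta}$ are adjacent nonzero entries of the big row, giving the coordinate equality.

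I expect the main obstacle to be the coordinates where exactly one side is zero. In that case necessity of the literal sequence equality fails unless one adopts the interpretation above, so I would state that in condition 2 (and 3) the equality is understood on coordinates where the rows are nonempty, or equivalently assume blocks have no zero rows or columns. With that clarified, the remaining verification is routine bookkeeping.
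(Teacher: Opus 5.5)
Your route is the paper's: alternation inside each block with $b_{ij}\neq 0$, plus a sign match at each junction between consecutive nonzero entries of $B$. Your diagnosis of the statement is also correct, and sharper than the paper's proof, which calls the converse ``immediate''. As literally stated, the ``only if'' direction fails in two ways. First, blocks with $b_{ij}=0$ need not be NASMs. Second, empty rows break condition 2: e.g.\ $B=(1\ 1)$, $A_{11}=(1)$, $A_{12}=(0)$ gives the NASM $(1\ 0)$, yet $b_{11}f_2(A_{11})=(1)\neq(0)=-b_{12}f_0(A_{12})$. Under the literal coordinatewise reading, your sufficiency argument, including the chaining of zero patterns, is correct. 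It supplies the step the paper's sketch takes for granted: consecutive nonzero entries in a row of $\mathbb{A}\circ B$ lie in blocks at preceding positions of $B$.

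The gap is in the repair you settle on. Reading conditions 2 and 3 ``only on coordinates where both rows (columns) are nonempty'' is not equivalent to assuming the blocks have no zero rows or columns. Under that weakened reading, the ``if'' direction is false. Your sufficiency proof uses exactly the fact that a zero on one side forces a zero on the other, and the weakened reading discards it. Concretely, take $B=(1\ 1\ 1)$, $A_{11}=A_{13}=(1)$, $A_{12}=(0)$. All blocks are NASMs, and the weakened condition 2 holds vacuously at both junctions, yet $\mathbb{A}\circ B=(1\ 0\ 1)$ is not a NASM. So as written you prove ``if'' for one statement and ``only if'' for another. Indeed, no reading of conditions 2--3 that compares only the two blocks at a junction can give an equivalence once empty rows or columns are allowed, by this example together with $(1\ 0)$ and $(0\ 1)$. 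To get a genuine ``if and only if'', you have two options:
\begin{enumerate}
\item assume every block with $b_{ij}\neq 0$ has no zero rows and no zero columns; then the two readings coincide and both of your directions go through verbatim; or
\item reformulate the junction condition row by row (column by column), imposing it between consecutive blocks among those with $b_{ij}\neq 0$ whose $r$-th row (column) is nonempty.
\end{enumerate}
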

\begin{proof}
Since each block $A_{ij}$ is alternating, it remains only to verify that the compatibility conditions 2 and 3 across adjacent blocks ensure the alternation of signs in the rows and columns of $\mathbb{A}\circ B$. More precisely, condition~(2) guarantees that whenever two nonzero entries in the same row of $B$ occur consecutively, the last nonzero entries of the former block and the corresponding first nonzero entries of the latter block have opposite signs; hence the nonzero entries in each row of $\mathbb{A}\circ B$ alternate in sign. Similarly, condition~(3) ensures that the nonzero entries in each column of $\mathbb{A}\circ B$ alternate in sign. Therefore, $\mathbb{A}\circ B$ is a NASM.
The converse implication is immediate.
\end{proof}

It is not difficult to see that every NASM arises from the construction of Theorem~\ref{main2}. As an immediate consequence, we obtain the following corollary.

\begin{cor}\label{cor}
Let $\mathbb{A}=(A_{ij})$ be an $m \times n$ block matrix where each block  
$A_{ij}\in \NASM(m_i,n_j)$ and let 
$B=(b_{ij})\in \Psi(m,n)$. Also, assume that the $A_{ij}$s have the same frame. Then, $\mathbb{A} \circ B$ is an alternating sign matrix whenever one of the following conditions hold:
\begin{enumerate}
  \item  each $A_{ij}$ is even and $B\in \Phi(m,n)$;
  \item each $A_{ij}$ is odd and $B\in \NASM(m,n)$;
\end{enumerate}
\end{cor}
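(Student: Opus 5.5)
We derive the corollary from Theorem~\ref{main2}, whose condition~1 holds by hypothesis. We must check conditions~2 and~3 in each case. Since all blocks share the same frame, write $f_0,f_1,f_2,f_3$ for the common frame sequences. The blocks in a fixed block-row all have the same number of rows, and those in a fixed block-column the same number of columns, so these sequences are comparable across adjacent blocks. (Implicitly $m_i$ and $n_j$ are constant, or at least the frame is common wherever comparisons are made.) Condition~2 then reads $b_{i,\alpha} f_2 = -b_{i,\beta} f_0$ whenever $b_{i,\alpha}$ precedes $b_{i,\beta}$. Condition~3 reads $b_{\alpha,j} f_3 = -b_{\beta,j} f_1$ whenever $b_{\alpha,j}$ precedes $b_{\beta,j}$.

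The key tool is Lemma~\ref{lem}: $f_0=(-1)^{\hh+1}f_2$ and $f_1=(-1)^{\kk+1}f_3$, where $\hh,\kk$ are the weight sequences of any block. In a block that is even (all weights even), this gives $f_0=-f_2$ and $f_1=-f_3$. In a block that is odd, it gives $f_0=f_2$ and $f_1=f_3$. One caveat is that zero rows have weight $0$, which is even. In the odd case every row and column has nonzero odd weight, so there is no problem. In the even case a zero row gives $f_0=f_2=0$ entrywise there, so $f_0=-f_2$ still holds.

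In case~1, $B$ is binary, so any two consecutive nonzero entries satisfy $b_{i,\alpha}=b_{i,\beta}=1$. Condition~2 becomes $f_2=-f_0$, which holds by the even case above. Condition~3 becomes $f_3=-f_1$, which holds likewise.

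In case~2, $B$ is a NASM, so consecutive nonzero entries in a row or column have opposite signs: $b_{i,\beta}=-b_{i,\alpha}$. Condition~2 becomes $b_{i,\alpha}f_2=b_{i,\alpha}f_0$, which is equivalent to $f_2=f_0$ since $b_{i,\alpha}=\pm1$. This holds by the odd case above, and condition~3 follows the same way. Theorem~\ref{main2} then yields that $\mathbb{A}\circ B$ is a NASM.

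The only point needing care is the bookkeeping of block dimensions. The blocks must have a common frame, and therefore common dimensions, for the frame comparisons to make sense; this is what the hypothesis ``same frame'' supplies. There is no real obstacle otherwise.
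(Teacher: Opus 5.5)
Your proposal is correct and is essentially the paper's proof. Lemma~\ref{lem} gives $f_2=-f_0$, $f_3=-f_1$ for even blocks and $f_2=f_0$, $f_3=f_1$ for odd blocks; combining this with the common frame and with $b_{i,\beta}=b_{i,\alpha}$ (binary $B$), resp.\ $b_{i,\beta}=-b_{i,\alpha}$ (NASM $B$), verifies conditions~2 and~3 of Theorem~\ref{main2}, and your remarks on zero rows and on the common frame forcing equal block dimensions are extra bookkeeping the paper leaves implicit.
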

\begin{proof} By assumption, all $A_{ij}$s have the same frame. 
We first deal with the case where the $A_{ij}$s are all even and $B=(b_{ij})$ is binary. In this case, each $b_{i,j}=0,1$ and, by Lemma \ref{lem}, we have that 
\begin{enumerate}
\item $f_2(A_{i,\alpha}) = -f_0(A_{i,\alpha}) =  -f_0(A_{i,\beta})$ whenever 
$b_{i,\alpha}=1$ precedes $b_{i,\beta}=1$, 
\item 
$f_3(A_{\alpha,j}) = -f_1(A_{\alpha,j}) =-f_1(A_{\beta, j})$, whenever 
  $b_{\alpha, j}=1$ precedes $b_{\beta, j}=1$. 
\end{enumerate}  
  The assertion then follows from Theorem \ref{main2}.
  
  Now, assume that the $A_{ij}$s are all odd and $B=(b_{ij})$ is a NASM. By Lemma \ref{lem}, we have that 
\begin{enumerate}
\item $f_2(A_{i,\alpha}) = f_0(A_{i,\alpha}) =  f_0(A_{i,\beta})$ whenever 
$b_{i,\alpha}=\pm1$ precedes $b_{i,\beta}=\mp1$, 
\item 
$f_3(A_{\alpha,j}) = f_1(A_{\alpha,j}) =f_1(A_{\beta, j})$, whenever 
  $b_{\alpha, j}=\pm1$ precedes $b_{\beta, j}=\mp1$. 
\end{enumerate}  
  As before, the assertion follows from Theorem \ref{main2}.
\end{proof}

The following corollary is a specialization of the previous one.

\begin{cor}
Assume there is a $\NASM(m,n; \hh,\kk)$ and  let
$B\in \Psi(p,q; \uu, \vv)$. 
Then, there exists a $\NASM(mp, nq; (h_i u_\alpha), (k_jv_\beta))$ in each of the following cases:
\begin{enumerate}
  \item  $\hh$ and $\kk$ are even sequences and $B$ is binary, or
  \item $\hh$ and $\kk$ are odd sequences and $B$ is a $\NASM$
\end{enumerate}
(where $h_i, k_i, u_i, v_i$ are the $i$-th entries of 
$\hh,\kk,  \uu, \vv$, respectively).
\end{cor}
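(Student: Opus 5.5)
We apply Corollary~\ref{cor} with every block equal to one fixed NASM. Let $A$ be the given $\NASM(m,n;\hh,\kk)$ and write $B=(b_{\alpha\beta})\in\Psi(p,q;\uu,\vv)$. Form the $p\times q$ block matrix $\mathbb{A}=(A_{\alpha\beta})$ with $A_{\alpha\beta}=A$ for all $\alpha,\beta$. Then $\mathbb{A}$ is an $mp\times nq$ array. We read $\mathbb{A}\circ B$ as in Theorem~\ref{main2}: the block in position $(\alpha,\beta)$ is $b_{\alpha\beta}A$. In Kronecker notation, $\mathbb{A}\circ B=B\otimes A$.

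All blocks are equal, so they share the same frame $\mathcal{F}(A)$. If $\hh,\kk$ are even sequences, then $A$ is even in the sense of the paper. If $\hh,\kk$ are odd, then $A$ is odd. Case~1 of the statement therefore matches case~1 of Corollary~\ref{cor}, with $A$ even and $B$ binary. Case~2 matches case~2, with $A$ odd and $B$ a NASM. In either case Corollary~\ref{cor} gives that $B\otimes A$ is a NASM. The hypothesis there that each block is a NASM holds, since $A$ is one.

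It remains to compute the weights. Index the rows of $B\otimes A$ by pairs $(\alpha,i)$, where $\alpha$ is the block row and $i$ the row inside the block. The row $(\alpha,i)$ is the concatenation over $\beta$ of $b_{\alpha\beta}$ times row $i$ of $A$. Each $b_{\alpha\beta}$ lies in $\{0,\pm1\}$, and a nonzero one preserves the support of that row. Hence the weight of row $(\alpha,i)$ is $h_i$ times the number of nonzero $b_{\alpha\beta}$, which is $h_iu_\alpha$. In the same way, column $(\beta,j)$ has weight $k_jv_\beta$. This gives the row weights $(h_iu_\alpha)$ and column weights $(k_jv_\beta)$ in the ordering by pairs, as claimed.

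The main obstacle is making sure the hypotheses of Corollary~\ref{cor} apply exactly as stated. There are two points to check. First, the Hadamard product $\mathbb{A}\circ B$ in Theorem~\ref{main2} must be read blockwise, with $b_{ij}$ scaling the whole block $A_{ij}$; the weight computation depends on this reading. Second, in case~1 the zero entries of $B$ must cause no trouble. The ``precedes'' relation only links consecutive nonzero entries of $B$, and an intervening zero block contributes no nonzero entries to the combined row. So the adjacent nonzero entries of $B\otimes A$ in a row are either inside one copy of $A$, or the last entry of one copy and the first entry of the next nonzero copy. This is precisely the situation handled by Lemma~\ref{lem} and Corollary~\ref{cor}.
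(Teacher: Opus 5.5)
Your proposal is correct and follows the paper's proof. Like the paper, you form the $p\times q$ block matrix whose blocks are all copies of the given NASM and invoke Corollary~\ref{cor}. Your explicit count of the weights $h_iu_\alpha$ and $k_jv_\beta$ (via $B\otimes A$) supplies the one step the paper leaves implicit.
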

\begin{proof}
  It is enough to consider the $p\times q$ block-matrix $A$, whose blocks are copies of a $\NASM(m,n; \hh,\kk)$, and then apply Corollary \ref{cor}.
\end{proof}

We complete this section with an example that constructs NASMs based on the previous two corollaries. 

\begin{ex}
Let 
$A=
\begin{pmatrix}
 1&-1\\
-1& 1
\end{pmatrix}
$ and
$
B=
\begin{pmatrix}
1&1\\
0&1
\end{pmatrix}
$. Note that $A$ is a NASM with even row and column weights and $B$ is a binary array.
Replacing each $1$ of $B$ by $A$ and each $0$ by a zero block gives the following NASM:
\[
\begin{pmatrix}
 1&-1& 1&-1\\
-1& 1&-1& 1\\
 0& 0& 1&-1\\
 0& 0&-1& 1
\end{pmatrix}.
\]
Similarly, let
$A=
\begin{pmatrix}
 1&-1&1\\
-1& 1&-1\\
 1&-1&1\\
\end{pmatrix}
$ and
$
B=
\begin{pmatrix}
1&-1\\
0&1
\end{pmatrix}
$. Note that $A$ is a NASM with odd row and column weights and $B$ is a NASM.
This time, replacing each $1$ of $B$ by $A$, each $-1$ by $-A$ and each $0$ by a zero block gives the following NASM:
\[
\begin{pmatrix}
 1&-1& 1&-1& 1&-1\\
-1& 1&-1& 1&-1& 1\\
 1&-1& 1&-1& 1&-1\\
 0& 0& 0& 1&-1& 1\\
 0& 0& 0&-1& 1&-1\\
 0& 0& 0& 1&-1& 1  
\end{pmatrix}.
\]
\end{ex}

\section{Concluding remarks and open problems}
In this paper, we investigated the existence problem for near alternating sign matrices with prescribed row and column weight sequences.

Our first main contribution is the graph-theoretic characterization given in Theorem~\ref{thm:bipartite}, which shows that the existence of a $\mathrm{NASM}(m,n;\h,\k)$ is equivalent to the existence of a binary array in $\Phi(m,n;\h,\k)$ whose associated graph is bipartite. This characterization immediately implies that the necessary conditions of Lemma~\ref{nec} are not sufficient in general when the row and column weight sequences are prescribed in a fixed order.

On the other hand, the example following Theorem~\ref{thm:bipartite} suggests that the situation may change substantially when only the multisets of row and column weights are prescribed. This led us to formulate Conjecture~\ref{conj:main}, which asks whether the conditions of Lemma~\ref{nec} are sufficient for the existence of a $\mathrm{NASM}(m,n;[\h],[\k])$.

Our second contribution is the composition construction developed in Section~3. Using the notion of frame, we obtained a general composition result, Theorem~\ref{main2}, that allows one to construct larger NASMs from smaller ones. As applications, we derived the existence of infinite families of NASMs.

Several interesting questions remain open.

\begin{enumerate}
\item Prove or disprove Conjecture~\ref{conj:main}.

\item Characterize the pairs $(\h,\k)$ for which there exists a
$\mathrm{NASM}(m,n;\h,\k)$ with the prescribed ordering.
In other words, this means to determine whether there exists an analogue of the Gale--Ryser theorem for NASMs. This is also equivalent to characterizing projection vectors (the row and column weight sequences) admitting a binary realization $A$ whose graph $\Gamma(A)$ is bipartite.

\item A stronger version of the previous problem is to characterize the quadruples $(L,T,\h,\k)$ for which there exists a
$\mathrm{NASM}(m,n;\h,\k)$, say $A$, whose left and top frame components satisfy
$f_0(A)=L$ and $f_1(A)=T$.
Notice that, by Lemma~\ref{lem}, the remaining frame components $f_2(A)$ and $f_3(A)$ are completely determined by $f_0(A)$, $f_1(A)$, $\h$, and $\k$.

\item Investigate the existence of convex arrays with prescribed multisets of row and column weights. By Theorem~\ref{thm:convex}, any positive result in this direction immediately yields new existence results for NASMs.

\item Develop further recursive and composition constructions based on frames, and determine which classes of NASMs can be generated from a finite collection of basic building blocks.
\end{enumerate}

\end{document}